\setlist[enumerate]{label=(\arabic*)}
\theoremstyle{plain}
\newtheorem{theorem}{Theorem}[section]
\newtheorem{proposition}[theorem]{Proposition}
\newtheorem{corollary}[theorem]{Corollary}
\newtheorem{lemma}[theorem]{Lemma}
\theoremstyle{definition}
\newtheorem{definition}[theorem]{Definition}
\newtheorem{remark}[theorem]{Remark}
\newtheorem{example}[theorem]{Example}
\newtheorem{question}[theorem]{Question}
\newtheorem*{theorem*}{Theorem}
\newtheorem*{definition*}{Definition}
\newcommand{\N}{\mathbb{N}}
\newcommand{\Z}{\mathbb{Z}}
\newcommand{\Q}{\mathbb{Q}}
\newcommand{\C}{\mathbb{C}}
\newcommand{\email}[1]{\href{mailto:#1}{\texttt{#1}}}
\newcommand{\subjclass}[2][2020]{%
  \let\@oldtitle\@title%
  \gdef\@title{\@oldtitle\footnotetext{\hspace*{-2em}#1 \emph{Mathematics subject classification.} #2}}%
}
\newcommand{\keywords}[1]{%
  \let\@@oldtitle\@title%
  \gdef\@title{\@@oldtitle\footnotetext{\hspace*{-2em}\emph{Keywords:} #1.}}%
}
\author[ ]{Leonardo Raffaello Maximilian Gasparro%
  \thanks{email: \email{leonardo.r.m.gasparro@gmail.com}}}
\author[1]{Lorenzo Luperi Baglini\orcidlink {0000-0002-0559-0770}%
  \thanks{email: \email{lorenzo.luperi@unimi.it}}}
\affil[1]{\small Dipartimento di Matematica, Università  di Milano, Via Saldini 50, 20133 Milano, Italy}
\title{Weak, strong and mixed extensions of relations to spaces of ultrafilters}
\subjclass{Primary: 54D80. Secondary: 03H15}
\keywords{ultrafilters, nonstandard analysis, extensions of relations, self-divisibility}
\begin{document}
\maketitle

%%TODO: abstract
\begin{abstract}
The use of nonstandard methods to characterize properties of weak, strong and mixed extensions of congruences to ultrafilters has been the main topic of several recent papers, focused mostly on congruences and divisions. We show that similar methods can be used to extend these characterizations to arbitrary relations and their interplay.
\end{abstract}

\section{Introduction}

The problem of extending $n$-ary relations defined on a set $X$ to $n$-ary relations on the set $\beta X$ of ultrafilters on $X$ has often been considered in the literature, for example in the context of ultrafilter extensions of arbitrary first-order models, see e.g. \cite{goranko2007filter}, \cite{saveliev2011ultrafilter}. Whilst the case $n=1$ is trivial, for $n\geq 2$ there is, in general, no canonical procedure to perform such an extension. At the best of our knowledge, two are the extensions usually considered in the literature, that we will dub\footnote{These notions have appeared in the literature with a variety of different names and notations; we will motivate in Section \ref{sec:weakandstrong} our name choice.} weak and strong extension, and denote by $R^{w},R^{s}$, respectively. The weak extension was considered implicitly first as early as in 1951, in a paper by Jónsson
and Tarski (see \cite{JT1, JT2}), whilst the strong extension first appeared, at the best of our knowledge, in works of Saveliev, see \cite{SavelievTwoTypes,SavOrd}. 
A systematic comparative study of both extensions in the case of binary relations has been done in \cite{SavelievTwoTypes}. For a detailed and through study of these and other ultrafilters extensions, including several historical remarks, we refer to the seminal paper \cite{poliakov2021ultrafilter} by Poliakov and Saveliev.

Recently, weak, strong and other extensions of certain arithmetic relations have been studied, using methods coming from nonstandard analysis, with the dual goal of better understanding the algebraic properties of $\beta\N$ and of finding applications in arithmetic Ramsey theory. In particular, extensions of the binary relation $D(a,b):=$ ``$a$ divides $b$'' and of the ternary relation $C(a,b,c):=$ ``$b$ is congruent to $c$ modulo $a$'' have been intensively studied, in a series of papers, by S\v{o}bot; see e.g. \cite{vsobot2015divisibility,vsobot2021congruence}, or \cite{vsobot2024survey} for a survey of his main results. The extensions of $D(a,b)$ produce rich order structures on $\beta\N$, whose properties are not yet completely understood. For a related study of properties of the weak extension of orders to types\footnote{Extensions to spaces of ultrafilters can be seen as particular instances of these more general constructions.}, we refer to \cite{baglini2025extending}.

For our purposes, the extensions of $C(a,b,c)$ are more relevant. In \cite{vsobot2021congruence} S\v{o}bot proved that $C^{s}(p,u,v)$ is, for $p$ fixed, an equivalence relation but that, in general, it has also certain counterintuitive arithmetic properties; for example, some ultrafilters are not congruent to $0$ modulo themselves. This issue is avoided by a third kind of extension of the congruence, that S\v{o}bot in \cite{vsobot2021congruence} called  ``congruence'' and denoted by $\equiv_{p}$. This extension is, in general, different from both the strong and the weak extension; in the same paper, S\v{o}bot left as an open question wheter $\equiv_{p}$ itself is always an equivalence relation. This question was answered negatively in \cite{di2023self} by Di Nasso, Luperi Baglini, Mennuni, Pierobon and Ragosta. They proved that $\equiv_{p}$ itself is an equivalence relation solely when it coincides with $C^{s}(p,\cdot,\cdot)$, and that this happens precisely when $p$ has a fundamental property, expressible in terms of divisibility, called self-divisibility (see \cite[Definition 4.1]{di2023self}). Self-divisibility is an interesting combinatorial notion that admits a large amount\footnote{Sixteen, to be precise.} of equivalent characterizations, as shown in \cite[Theorem 7.7]{di2023self}. This notion has already been used to prove an extension of Hindman's Theorem involving both sums and divisions in \cite{SumQuot}. This has shown that notions arising from the extension of relations to $\beta\N$ can find nontrivial applications to combinatorial problems in Ramsey theory. The full potential of this approach has yet to be completely understood.

In this paper, our goal is to show that the main known results about the extensions of divisibility and congruence can be seen as particular instances of broader phenomenons, and to generalize the notion of self-divisibility. First, we consider the problem of which properties of $R$ are preserved by $R^{w},R^{s}$ in general; whilst the weak case is easily settled, as shown in Theorem \ref{widetilde-R-equivalence-relation-theorem}, the situation is usually harder in the strong case. Our main characterizing result for $R^{s}$, that generalizes the analogous result proven by S\v{o}bot for congruences in \cite{vsobot2021congruence}, is the following one.

\begin{theorem*}[A]
For an equivalence relation $R \subseteq I^2$, the following facts are equivalent.
\begin{enumerate}[label=(\roman*)]
        	\item $R^s$ is an equivalence;
	\item $R^s$ is reflexive;
    \item $R^s$ is symmetric and total;
            \item $R^{s}$ is total.
\end{enumerate}
\end{theorem*}

We then move to the problem of extending the notions of $\equiv_{p}$ and self-divisibility to arbitrary relations. The former is attained by the so-called mixed extensions $R^{m}$, the latter by the notion of self-$R$ ultrafilter. What complicates the relationships between $R^{m},R^{s}$ and self-$R$ ultrafilters is that, in the congruence-division case, the connection is due to certain specific algebraic properties of congruences, in particular the facts that a number is congruent to $0$ modulo itself, and that being congruent modulo some $n\in\N$ forces to be congruent modulo all divisors of $n$. We introduce the analogues of these algebraic properties for arbitrary relations, and name them $e$-reflexivity and $e$-transitivity; many relations that arise naturally when working in algebraic contexts satisfy them, as shown in Section \ref{generalization-of-self-divisible-ultrafilters-section}. Our principal result, which is a generalization of the main equivalences of \cite[Theorem 4.10]{di2023self}, settles the relationships between $R^{m},R^{s}$ and self-$R$ ultrafilters in the $e$-transitive and $e$-reflexive cases.

\begin{theorem*}[B] Let $I$ be a set, let $e\in I$ and let $R\subseteq (I\setminus\{e\})\times I^{2}$ be both $e$-transitive and $e$-reflexive. The following facts are equivalent:
\begin{enumerate}[label=(\roman*)]
\item $R^{m}(p,\cdot,\cdot)=R^{s}(p,\cdot,\cdot)$;
\item $p$ is a self-$R(\cdot,\cdot,e)$ ultrafilter.
\end{enumerate}
\end{theorem*}

The paper is organized as follows. Similarly to \cite{di2023self,vsobot2015divisibility,vsobot2021congruence}, our approach is based on a nonstandard characterization of ultrafilters, whose basic properties are recalled in Section \ref{Sec:nonstandard}. Section \ref{sec:weakandstrong} is divided in two subsections. In the first we give the nonstandard characterizations of the strong and weak extensions of arbitrary relations, in the second we show several results regarding the preservation (or not) of properties by these extensions, including Theorem A. In Section \ref{generalization-of-self-divisible-ultrafilters-section} we introduce the notion of self-$R$ ultrafilters, of mixed extensions of relations, we prove Theorem B and we provide some examples of its possible applications. Finally, in Section \ref{sec open questions} we pose some open questions that we deem to be relevant.

\paragraph{Funding}

 LLB is supported by the project PRIN 2022 ``Logical methods in combinatorics'', 2022BXH4R5, Italian Ministry of University and Research (MUR).

\section{Nonstandard preliminaries}\label{Sec:nonstandard}

In this paper we assume familiarity with the basics of nonstandard methods, for which we refer the reader to, e.g., \cite{goldblatt2012lectures}. All nonstandard extensions we use will be assumed to be saturated enough. As our approach will be based on viewing ultrafilters as points in nonstandard extensions, and many of our results will involve tensor products, we briefly recall some well-known facts we will need.

Let $X$ be an infinite set. Ultrafilters on $X$ and points of $^{\ast}X$ can be identified as follows: to any point $\alpha\in\,^{\ast}X$ we can associate the ultrafilter $u_{\alpha}:=\{ A \subseteq X \mid \alpha \in {^*A} \}$, whilst any ultrafilter $u\in\beta X$ can be identified with
$\mu(u) = \{ \alpha \in\,^{\ast}X \mid u = u_\alpha \}$. When $\alpha,\beta$ generate the same ultrafilter we will write $\alpha\sim\beta$. $\mu(u)$ is called the monad of $u$, and any $\alpha \in \mu(u)$ is called a generator of $u$. Notice that, when $u$ is the principal ultrafilter on $x\in X$, then $\mu(u)=\{x\}$. Our saturation assumption entails that every monad is nonempty.

\begin{remark}\label{remark:quantifier-switch} An immediate consequence of the definition of monad of $u$, that we will often use, is that generators allow to switch quantifiers, in the following sense: given $A\subseteq X, u\in\beta X$, the following facts are easily seen to be equivalent:
\begin{enumerate}[label=(\roman*)]
\item $A\in u$;
\item $\exists \alpha\in\mu(u) \ \alpha\in\,^{\ast}A$;
\item $\forall \alpha\in\mu(u) \ \alpha\in\,^{\ast}A$.
\end{enumerate}
\end{remark}

We refer to \cite{di2015hypernatural, dinassoNonstandardMethodsRamsey2019, baglini2012hyperintegers} for a more detailed study of the association between ultrafilters and nonstandard points and its applications in combinatorics.

As it is well-known, given infinite sets $I,J$, the relationship between $\beta I \times \beta J$ and $\beta (I\times J)$ is not straightforward, as for any $u\in\beta I,v\in\beta J$ the product 
$$u\times v:=\{A\times B\mid A\in u, B\in v\}$$
is a filter but not, in general, an ultrafilter on $I\times J$. Among all the ultrafilters that extend $u\times v$, the tensor product $u\otimes v$ is the one that plays a special role in many applications, including ours. Let us recall its definition.

\begin{definition} Let $I,J$ be sets. Given $u\in\beta I,v\in\beta J$, the tensor product $u\otimes v$ is the ultrafilter on $I\times J$ defined as follows: for all $A\subseteq I\times J$, we let
\[A\in u\otimes v\Leftrightarrow \{i\in I\mid \{j\in J\mid (i,j)\in A\}\in v\}\in u.\]
Given $(\alpha,\beta)\in \,^{\ast}(I\times J)$, we say that $(\alpha,\beta)$ is a \emph{tensor pair} if $u_{(\alpha,\beta)}$ is a tensor product. Similarly for tensor triples, tensor $k$-uples, etc.
\end{definition}

In the following proposition we list the known facts regarding ultrafilters and their tensors that we will need. All of these, except the last one, were already known; in particular in \ref{T6}, \ref{T7} we take the chance to amend two hypotheses that were wrongfully stated\footnote{Luckily enough for the second author of this paper, who was also the author of \cite{luperi2019nonstandard}, only slightly wrongfully stated.} in \cite{luperi2019nonstandard}.

\begin{proposition}\label{proprietà tensori} Let $I,J,H$ be sets, let $u\in\beta I,v\in\beta J, p\in \beta H$, let $\alpha\in\,^{\ast}I,\beta\in\,^{\ast}J,\gamma\in\,^{\ast}H$. The following facts hold:
\begin{enumerate}[label=(\roman*)]
\item\label{T1} $u\otimes (v\otimes p)=(u\otimes v)\otimes p$.
\item\label{T2} Let $I=J=\N$. Then $(\alpha,\beta)$ is a tensor pair if and only if for all $f:\N\rightarrow \N$ we have $f(\beta)\in\N$ or $\alpha<f(\beta)$.
\item\label{T3} If $t\supseteq u_{\alpha}\times u_{\beta}$ then for all $(\eta,\sigma)\in \mu(t)$ we have $\eta\sim\alpha,\sigma\sim\beta$.
\item\label{T4} For all $i\in I, j\in J$ $(i,\beta)$ and $(\alpha,j)$ are tensor pairs.
\item\label{T5} If $(\alpha,\beta)$ is a tensor pair, then $u_{(\alpha,\beta)}=u_{\alpha}\otimes u_{\beta}$.
\item\label{T6} $(\alpha,\beta,\gamma)$ is a tensor triple if and only if $(\alpha,\beta),(\beta,\gamma)$ and $(\alpha,\gamma)$ are tensor pairs.
\item\label{T7} If $\beta\notin J$ and $(\alpha,\beta),(\beta,\gamma)$ are tensor pairs, then $(\alpha,\gamma)$ is a tensor pair.
\item\label{T8} Let $t\in\beta(I\times J)$ be an ultrafilter that extends $u_{\alpha}\times u_{\beta}$. Then there exist $\alpha^{\prime}\sim\alpha$, $\beta^{\prime}\sim \beta$ such that $(\alpha,\beta^{\prime})\sim (\alpha^{\prime},\beta)\in\mu(t)$.
\item\label{T9} Let $A,B$ be sets, $f:I\rightarrow A, g:J\rightarrow B$ functions. If $(\alpha,\beta)$ is a tensor pair, then so is $(\,^{\ast}f(\alpha),\,^{\ast}g(\beta))$.
\end{enumerate}
    
\end{proposition}

\begin{proof}
    (i) Associativity of $\otimes$ is a well-known property, whose proof is trivial.

    (ii) This is Puritz's Theorem, see \cite{puritz1972skies}.

    (iii)-(iv) These are immediate, see \cite[Proposition 4.6]{luperi2019nonstandard}.

    (v) This follows easily from (iii).

    (vi) This is an instance of \cite[Theorem 4.22]{luperi2019nonstandard}; we note explicitly that, with respect to the cited reference, in here we are additionally assuming $(\alpha,\gamma)$ to be a tensor pair. This assumption was wrongfully missed in \cite{luperi2019nonstandard}, and it is needed in the case where $\beta\in J$.

    (vii) This is \cite[Corollary 4.24]{luperi2019nonstandard}; again, we note explicitly that the assumption $\beta\notin J$ was wrongfully missed in the cited reference.

    (viii) We prove the existence of $\beta^{\prime}$, as the existence of $\alpha^{\prime}$ can be proven similarly.

    We observe that for all $A\in t$, as $t\supseteq u_{\alpha}\times u_{\beta}$ necessarily $\pi_{1}(A)=\{i\in I\mid \exists j\in J \ (i,j)\in A\}\in u_{\alpha}$. Hence, $\alpha\in\,^{\ast}\pi_{1}(A)$. Therefore
    $$\{\{\eta\in\,^{\ast}J\mid (\alpha,\eta)\in\,^{\ast}A\}\mid A\in t\}$$
    is a family of internal sets with the finite intersection property; by saturation, its intersection is non emtpy. Let $\beta^{\prime}$ be in this intersection. Then, by construction, $(\alpha,\beta^{\prime})\in \mu(t)$, hence $\beta^{\prime}\in \mu(u_{\beta})$ by \ref{T3}.
		
		(ix) This is a rephrasing of \cite[Theorem 4.15.(7)]{luperi2019nonstandard}.
\end{proof}

\section{Weak and strong extensions of relations}\label{sec:weakandstrong}

As stated in the introduction, the two most-studied extensions of relations to ultrafilters are the weak and the strong, which are defined as follows: let $I_{1},\dots,I_{n}$ be sets and consider a relation $R \subseteq I_1 \times \ldots \times I_n$.

\begin{definition}\label{relation-canonical-extension}
The weak extension of $R$ is the relation $R^{w} \subseteq \beta I_1 \times \ldots \times \beta I_n$ such that, for all $u_{1},\dots,u_{n}\in \beta I_1 \times \ldots \times \beta I_n$, $R^{w}(u_1 , \ldots , u_n)$ holds if and only if
$$(\forall A_1 \in u_1) \, \ldots \, (\forall A_n \in u_n) \, (\exists a_1 \in A_1) \, \ldots \, (\exists a_n \in A_n) \, R(a_1 , \ldots , a_n) .$$
\end{definition}

\begin{definition}\label{relation-strong-extension}
The strong extension of $R$ is the relation $R^s \subseteq \beta I_1 \times \ldots \times \beta I_n$ such that, for all $u_{1},\dots,u_{n}\in \beta I_1 \times \ldots \times \beta I_n$,  $R^s(u_1 , \ldots , u_n)$ holds if and only if
$$\{ a_1 \in I_1 \mid \ldots \{ a_n \in I_n \mid R(a_1 , \ldots , a_n) \} \in u_n \ldots \} \in u_1 .$$
\end{definition}

As we are interested in studying these extensions from the point of view of nonstandard analysis, first we are going to give their characterizations in terms of nonstandard generators of ultrafilters.

\subsection{Nonstandard characterizations}\label{generalization-original-section}

The nonstandard characterizations of weak and strong extensions of arbitrary relations are analogous, and generalize, those given by {\v{S}}obot in \cite{vsobot2015divisibility, vsobot2021congruence} for the divisibility and the congruence.

\begin{theorem}\label{canonical-relation-extension-generator-theorem-01}
Let $I_{1},\ldots,I_{n}$ be nonempty sets, and let $R\subseteq I_1 \times \ldots \times I_n$. For all $u_1 \in \beta I_1 , \ldots , u_n \in \beta I_n$ the following are equivalent.
\begin{enumerate}[label=(\roman*)]
        	\item $R^{w}\left(u_1 , \ldots, u_n\right)$;
            	\item $\exists \alpha_1 \in \mu(u_1) \ldots \exists \alpha_n \in \mu(u_n) \, {^{*}R}(\alpha_1 , \ldots , \alpha_n)$;
                \item $\exists p\supseteq u_1\times\ldots\times u_n$ such that $R\in p$;
                
                \item $\forall i\leq n$ $\forall \alpha_{i}\in \mu(u_{i})\ \forall j\leq n, j\neq i \ \exists \alpha_j \in \mu(u_j)\,^{\ast}R(\alpha_1 , \ldots , \alpha_n)$.

\end{enumerate}
\end{theorem}
\begin{proof}

(i) $\Rightarrow$ (ii) For all $A_1 \in u_1 , \ldots , A_n \in u_n$ let
$$\Phi_{A_1 , \ldots , A_n} := \{ (a_1 , \ldots , a_n) \in A_1 \times \ldots \times A_n \mid R(a_1 , \ldots , a_n) \} .$$
By Definition \ref{relation-canonical-extension}, $\Phi_{A_1 , \ldots , A_n} \not = \emptyset$. Clearly, the family $\{ \Phi_{A_1 , \ldots , A_n} \}_{A_1 \in u_1 , \ldots , A_n \in u_n}$ has the finite intersection property, hence by saturation $^{\ast}\Phi_{A_1 , \ldots , A_n} \not = \emptyset$. Any point in this intersection witnesses (ii).

(ii) $\Rightarrow$ (iii) Let $\alpha_{1},\ldots,\alpha_{n}$ be given by (ii); (iii) holds by letting $p=u_{(\alpha_{1},\ldots,\alpha_{n})}$. 

(iii) $\Rightarrow$ (i) By assumption, for any $A_1 \in u_1,\ldots, A_n \in u_n$, $(A_1 \times \ldots \times A_n) \cap R \not = \emptyset$, i.e., $\exists (a_1, \ldots  a_n) \in (A_1 \times \ldots \times A_n)$ such that $R(a_1 , \ldots , a_n)$ holds. Since $A_1 , \ldots , A_n$ are arbitrary, $R^{w} (u_1 , \ldots , u_n)$ holds by definition.

(ii) $\Rightarrow$ (iv) Without loss of generality we let $i=1$, the general case being similar. For all $A_{2}\in u_{2},\ldots, A_{n}\in u_{n}$ let $J_{A_{2},\ldots,A_{n}}:=\{x\in I_{1}\mid \exists a_{2}\in A_{2},\ldots, a_{n}\in A_{n} \ \left(x,a_{2},\ldots,a_{n}\right)\in R\}$. By (ii), $J_{A_{2},\ldots,A_{n}}\in u_{1}$. In particular, \[H_{A_{2},\ldots,A_{n}}(\alpha_{1}):=\{(\alpha_{2},\ldots,\alpha_{n})\in \,^{\ast}\left(A_{2} \times \ldots \times A_{n}\right)\mid (\alpha_{1},\alpha_{2}\ldots,\alpha_{n})\in \,^{\ast}R\}\neq\emptyset.\] We conclude by taking $(\alpha_{2},\ldots,\alpha_{n})\in\bigcap_{A_{2}\in u_{2},\ldots,A_{n}\in u_{n}}H_{A_{2},\ldots,A_{n}}(\alpha_{1})$, which is nonempty by saturation.

(iv) $\Rightarrow$ (ii) This is immediate.\end{proof}

The analogue of Theorem \ref{canonical-relation-extension-generator-theorem-01} for the strong extension is the following result, whose proof we omit as the equivalence of (i) and (iii) follows directly from the definition of strong extension (Definition \ref{relation-strong-extension}) and the other two equivalences follow from Remark \ref{remark:quantifier-switch}.

\begin{theorem}\label{strong-relation-extension-generator-theorem-01}
Let $I_{1},\ldots,I_{n}$ be nonempty sets, and let $R\subseteq I_1 \times \ldots \times I_n$. For all $u_1 \in \beta I_1 , \ldots , u_n \in \beta I_n$ the following are equivalent.
\begin{enumerate}[label=(\roman*)]
        	\item $R^s (u_1 , \ldots , u_n)$;
            \item $\exists (\alpha_1 , \ldots , \alpha_n) \in \mu(u_1 \otimes \ldots \otimes u_n) \ {^*R} (\alpha_1 , \ldots , \alpha_n)$;
            \item $R \in u_{1}\otimes\ldots\otimes u_n$;
	
    \item $\forall (\alpha_1 , \ldots , \alpha_n) \in \mu(u_1 \otimes \ldots \otimes u_n) \ {^*R} (\alpha_1 , \ldots , \alpha_n)$.
\end{enumerate}
\end{theorem}

\begin{remark} Conditions (iii) in Theorem \ref{relation-canonical-extension} and \ref{canonical-relation-extension-generator-theorem-01} are the reason why we decided to use the names ``weak'' and ``strong'': in both cases we are checking if $R$ belongs to an ultrafilter extending $u_{1}\times\dots\times u_{n}$, but in the weak case this property being witnessed by any such ultrafilter suffices, whilst in the strong case we fix exactly which ultrafilter to check, the tensor product $u_{1}\otimes\dots\otimes u_{n}$.\end{remark}

\subsection{Weak and strong extensions of equivalence and pre-order relations}\label{preservation-of-relation-properties-section}

Theorem \ref{canonical-relation-extension-generator-theorem-01} and Theorem \ref{strong-relation-extension-generator-theorem-01} allow to easily prove basic properties of weak and strong extensions of relations. Related results, with a much stronger focus on topological aspects, were obtained by Saveliev in \cite{SavelievTwoTypes}. Here we follow an approach closer to that of {\v{S}}obot in \cite{vsobot2015divisibility}; in fact, in this section we generalize his results on extensions of equivalence relations on $\N$ to equivalence relations defined on arbitrary sets, and extend them further to pre-orders.

First, it is relatively immediate to prove that weak extensions preserve reflexivity, symmetry, transitivity and connectedness\footnote{We recall that $R$ is connected when for all $x\neq y\in I$ $R(x,y)\vee R(y,x)$ holds.}.
\begin{theorem}\label{widetilde-R-equivalence-relation-theorem}
Let $I$ be a set and $R \subseteq I^2$ be a relation. The following properties hold:
\begin{enumerate}[label=(\roman*)]
    \item if $R$ is reflexive then $R^{w}$ is reflexive;
    \item if $R$ is symmetric then $R^{w}$ is symmetric;
    \item if $R$ is transitive then $R^{w}$ is transitive;
    \item if $R$ is connected then $R^{w}$ is connected.
\end{enumerate}

In particular, if $R$ is an equivalence relation, then $R^{w}$ is an equivalence relation, and if $R$ is a total pre-order then $R^{w}$ is a total pre-order.
\end{theorem}
\begin{proof} First, let us observe that if $R$ is reflexive/symmetric/transitive/total then $^{\ast}R$ is the same, by transfer. 

(i) Let $u \in \beta I$, $\alpha \in \mu(u)$. By our assumption $^{\ast}R(\alpha, \alpha)$ holds, hence by Theorem \ref{canonical-relation-extension-generator-theorem-01}.(ii), $R^{w}(u,u)$ holds.

(ii) Let $u,v \in \beta I$. Assume that $R^{w}(u,v)$ holds. By Theorem \ref{canonical-relation-extension-generator-theorem-01}.(ii), there exist $\alpha\in \mu(u),\beta \in \mu(v)$ such that $^{\ast} R(\alpha,\beta)$ holds, hence $^{\ast} R(\beta,\alpha)$ holds, and we conclude again by Theorem \ref{canonical-relation-extension-generator-theorem-01}.(ii).

(iii) Let $u,v,w \in \beta I$, and assume that $R^{w}(u,v)$ and $R^{w}(v,w)$ hold. By Theorem \ref{canonical-relation-extension-generator-theorem-01}.(ii) there exist $\alpha\in\mu(u),\beta\in\mu(v)$ such that $^{\ast}R(\alpha,\beta)$. By Theorem \ref{canonical-relation-extension-generator-theorem-01}.(iv), there exists $\gamma\in\mu(w)$ such that $^{\ast}R(\beta,\gamma)$ holds. By transitivity of $^{\ast}R$ hence $^{\ast}R(\alpha,\gamma)$ holds,  and we conclude again by Theorem \ref{canonical-relation-extension-generator-theorem-01}.(ii). 

(iv) Let $u,v \in \beta I$ with $u \not = v$. Pick $\alpha\in\mu(u),\beta\in\mu(v)$. Then either $^{\ast}R(\alpha,\beta)$ or $^{\ast}R(\beta,\alpha)$ holds. By Theorem \ref{canonical-relation-extension-generator-theorem-01}.(ii) we conclude, as the first case entails that $R^{w}(u,v)$ holds, the latter entails that $R^{w}(v,u)$ holds.\end{proof}

As already known from the works of {\v{S}}obot, the analogue of Theorem \ref{widetilde-R-equivalence-relation-theorem} for strong extensions fails in $\N$, as reflexivity and symmetry are not always preserved. This, again, is a general fact: by Theorem \ref{strong-relation-extension-generator-theorem-01}.(iv), reflexivity of $R^{s}$ holds if and only if, for all tensor pairs $(\alpha,\beta)$, $^{\ast}R(\alpha,\beta)$ holds, whilst symmetry of $R^{s}$ holds if and only if, for all tensor pairs $(\alpha,\beta)$, $R^{s}(\alpha,\beta)$ implies $R^{s}(\gamma,\delta)$ whenever $(\gamma,\delta)\sim (\beta,\alpha)$. These properties  can fail independently, as shown in the following examples.

\begin{example}\label{R-s-non-equivalence-relation-example}
Let $R$ be the equality on $\N^{2}$. Then $R^{s}$ is not reflexive, as by Proposition \ref{proprietà tensori}.\ref{T2} it is easily seen that whenever $u$ is non principal, if $(\alpha,\beta)\in\mu(u\otimes u)$ then $\alpha<\beta$. On the other hand, $R^{s}$ is symmetric (and transitive), as $R^{s}(u,v)$ entails $u=v$ and $u$ principal, hence $R^{s}(v,u)$ holds as well. Notice that, maybe surprisingly, the relation $\neq^{s}$ is reflexive on non principal ultrafilters. 
\end{example}

\begin{example}\label{R-s-non-symmetry-example}

Let $R =\{(2n,2m)\mid n,m\in\N\} \cup \{(2n+1,2m+1)\mid n,m\in\N\}\cup \{ (2n , 2m+1) \mid n \leq m \} \cup \{ (2m+1 , 2n) \mid n \leq m \} \subseteq \mathbb{N}^2$
which is, by definition, reflexive and symmetric (but not transitive). Clearly, $R^s$ is reflexive, since for all $(\alpha, \beta) \in \mu(u \otimes u)$, $\alpha$ and $\beta$ have the same parity, hence $^{\ast}R (\alpha, \beta)$ holds, and we conclude by Theorem \ref{strong-relation-extension-generator-theorem-01}.(iv). As for the symmetry of $R^{s}$, let $u,v$ be nonprincipal ultrafilters such that $u$ contains the set of even numbers, $v$ contains the set of odd numbers. By Proposition \ref{proprietà tensori}.\ref{T2}, if $(\alpha,\beta)\in \mu(u \otimes v)$ then necessarily $\alpha$ is even, $\beta$ is odd and $\alpha < \beta$. By Theorem \ref{strong-relation-extension-generator-theorem-01}.(iv), $R^{s}(u,v)$ holds. However, if $(\gamma,\delta)\in \mu(v\otimes u)$ then $\gamma$ is odd, $\delta$ is even and $\gamma<\delta$ hence, again by Theorem \ref{strong-relation-extension-generator-theorem-01}.(ii), $R^{s}(v,u)$ does not hold. This shows that $R^{s}$ is not symmetric.
\end{example}

Connectedness is another property that is not necessarily preserved by strong extensions, as proven by the following example.

\begin{example}
Consider the relation $> \subseteq \mathbb{N}^2$ and observe that it is connected. Now, consider two non-principal ultrafilters $u,v \in \beta \mathbb{N}$ and observe that, for $(\alpha,\beta) \in \mu(u \otimes v)$, we have $\beta > \alpha$ by Proposition \ref{proprietà tensori}.\ref{T2}, hence $u >^s v$ does not hold and, for the same reasons, $v >^s u$ does not hold. This shows that $>^s$ is not connected.
\end{example}

In contrast, transitivity of $R$ always lifts to transitivity of $R^{s}$.

\begin{proposition}\label{strong-preservation-transitivity}
If $R \subseteq I^2$ is a transitive relation, then also $R^s \subseteq (\beta I)^2$ is transitive.
\end{proposition}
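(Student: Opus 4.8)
The plan is to run the argument through the generator description of the strong extension (Theorem \ref{strong-relation-extension-generator-theorem-02}), using the transitivity of tensor pairs (Theorem \ref{k-tensor-pair-transitivity-corollary}) to glue together witnesses. So suppose $R^s(u,v)$ and $R^s(v,w)$ with $u,v,w \in \beta I$. By enlargement the monad $\mu(u \otimes v \otimes w) \subseteq ({}^*I)^3$ is nonempty, so I fix a generator $(x,y,z) \in \mu(u\otimes v\otimes w)$; equivalently, $(x,y,z)$ is a tensor triple with $\kU_x = u$, $\kU_y = v$, $\kU_z = w$.

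The next step is to extract generators of the three pairwise tensor products from this single triple. Since the coordinate projections push $u\otimes v\otimes w$ forward to the corresponding $2$-fold tensor products — equivalently, by Theorem \ref{k-tensor-pair-characterizations-theorem}, $(x,y)$ and $(y,z)$ are tensor pairs — one gets $(x,y) \in \mu(u\otimes v)$ and $(y,z) \in \mu(v\otimes w)$. Crucially, $(x,z) \in \mu(u\otimes w)$ as well: this is exactly where Theorem \ref{k-tensor-pair-transitivity-corollary} is used, giving that $(x,z)$ is itself a tensor pair (so $\kU_{(x,z)} = u \otimes w$) because $(x,y)$ and $(y,z)$ are. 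The finitely many degenerate cases in which one of $u,v,w$ is principal are immediate from the definition of $R^s$ and can be dispatched separately (or absorbed into the pushforward computation, which holds unconditionally).

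Now I conclude. Since $R$ is transitive, ${^*R}$ is transitive on ${}^*I$ by Transfer. From $R^s(u,v)$ and $(x,y) \in \mu(u\otimes v)$, the implication (i)$\Rightarrow$(iii) of Theorem \ref{strong-relation-extension-generator-theorem-02} gives ${^*R}(x,y)$; likewise ${^*R}(y,z)$ from $R^s(v,w)$ and $(y,z)\in\mu(v\otimes w)$. Transitivity of ${^*R}$ yields ${^*R}(x,z)$, and since $(x,z) \in \mu(u\otimes w)$, the implication (ii)$\Rightarrow$(i) of Theorem \ref{strong-relation-extension-generator-theorem-02} gives $R^s(u,w)$, as required.

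The only real obstacle I anticipate is the claim $(x,z)\in\mu(u\otimes w)$: one must know that eliminating the middle coordinate sends $u\otimes v\otimes w$ to $u\otimes w$, which is precisely the transitivity of tensor pairs translated back to generators; everything else is a routine application of the characterisations already established. (As a sanity check, there is also a fully elementary proof avoiding nonstandard methods: for each $a$ in the $u$-large set $\{a : \{b : R(a,b)\}\in v\}$ one intersects $\{b : R(a,b)\}$ with the $v$-large set $\{b : \{c : R(b,c)\}\in w\}$, picks a witness $b$, and uses transitivity of $R$ to obtain $\{c : R(a,c)\}\in w$; hence $\{a : \{c : R(a,c)\}\in w\}\in u$, i.e., $R^s(u,w)$.)
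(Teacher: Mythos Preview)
Your proof is correct and follows essentially the same approach as the paper: both use the generator characterization of $R^s$ (Theorem~\ref{strong-relation-extension-generator-theorem-02}) together with the transitivity of tensor pairs (Theorem~\ref{k-tensor-pair-transitivity-corollary}), and both separate out the principal cases. The only cosmetic difference is that you start from a tensor triple $(x,y,z)\in\mu(u\otimes v\otimes w)$ and project down, whereas the paper picks two tensor pairs $(\overline{x},\overline{y})\in\mu(u\otimes v)$ and $(\overline{y},\overline{z})\in\mu(v\otimes w)$ sharing the middle coordinate; your route is arguably tidier since the triple automatically provides the common $y$.
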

\begin{proof}
Let $u,v,p \in \beta I$ be such that both $R^s(u,v)$ and $R^s(v,p)$ hold. 

If $u$ is the principal ultrafilter on $i\in I$, pick $(\alpha,\beta)\in\mu (v\otimes p)$, observe that $(i,\alpha)$ and $(i,\beta)$ are tensors by Proposition \ref{proprietà tensori}.\ref{T4} and conclude as, by the transitivity of $^{\ast}R$, from $^{\ast}R(i,\alpha)$ and $^{\ast}R(\alpha,\beta)$ we deduce $^{\ast}R(i,\beta)$.

If $v$ is the principal ultrafilter on $i\in I$, pick $(\alpha,\beta)\in\mu (u\otimes p)$, observe that $(\alpha,i)$ and $(i,\beta)$ are tensors by Proposition \ref{proprietà tensori}.\ref{T4} and argue as above.

If $p$ is the principal ultrafilter on $i\in I$, pick $(\alpha,\beta)\in\mu(u\otimes v)$, notice that $(\alpha,i),(\beta,i)$ are tensors by Proposition \ref{proprietà tensori}.\ref{T4}  and argue as above.

If $u,v,p$ are nonprincipal, by Theorem \ref{strong-relation-extension-generator-theorem-01}.(ii), there exists $(\alpha,\beta)\in\mu(u \otimes v)$ such that $^{\ast}R(\alpha,\beta)$ holds. By Proposition \ref{proprietà tensori}.\ref{T8} , there exists $\gamma\in\,^{\ast}I$ such that $(\beta,\gamma)\in\mu(v\otimes p)$, and by Theorem \ref{strong-relation-extension-generator-theorem-01}.(iv) $^{\ast}R(\beta,\gamma)$ holds. By transitivity of $^{\ast}R$ hence $^{\ast}R(\alpha,\gamma)$ holds, and we conclude by Theorem \ref{strong-relation-extension-generator-theorem-01}.(ii) as $(\alpha,\gamma)\in\mu(u\otimes p)$ by Proposition \ref{proprietà tensori}.\ref{T7} .\end{proof}

Whilst Examples \ref{R-s-non-equivalence-relation-example}, \ref{R-s-non-symmetry-example} showed the independence of the preservation of reflexivity and symmetry for $R^{s}$ when $R$ is not transitive, in the transitive case the situation is simpler.

\begin{theorem}\label{strong-preservation-symmetry}
Let $R \subseteq I^2$ be a symmetric and transitive relation. The following are equivalent:
\begin{enumerate}[label=(\roman*)]
    \item $R^{s}$ is reflexive; 
    \item $R^{s}$ is symmetric and total\footnote{A binary relation $S$ on $X\times Y$ is total when for all $x\in X$ there exists $y\in Y$ such that $S(x,y)$ holds.};
    \item $R^{s}$ is total;
    \item $R^{s}$ is an equivalence relation. 
    \end{enumerate}
\end{theorem}

\begin{proof} (i)$\Rightarrow$ (ii) The second part of the claim is trivial, as $R^{s}(u,u)$ holds for all $u$ by assumption. To prove that $R^{s}$ is symmetric, let $u,v \in \beta I$ be such that $R^s(u,v)$. By Theorem \ref{strong-relation-extension-generator-theorem-01}.(ii), $^{\ast}R(\alpha,\beta)$ holds for some $(\alpha,\beta)\in\mu(u\otimes v)$. As $R$ is symmetric, $^{\ast}R(\beta,\alpha)$ holds as well. By Proposition \ref{proprietà tensori}.\ref{T1}, \ref{T8} there exists $\beta^{\prime}\in\mu(v)$ such that $(\beta^{\prime},\beta,\alpha)\in\mu (v\otimes v\otimes u)$. By Proposition \ref{proprietà tensori}.\ref{T6} hence $(\beta^{\prime},\beta)\in \mu(v\otimes v), (\beta^{\prime},\alpha)\in \mu(v\otimes u)$. By reflexivity of $R^{s}$ and Theorem \ref{strong-relation-extension-generator-theorem-01}.(iv), $^{\ast}R(\beta^{\prime},\beta)$ holds. By transitivity, hence, $^{\ast}R(\beta^{\prime},\alpha)$ holds, and we conclude by Theorem \ref{strong-relation-extension-generator-theorem-01}.(ii).

(ii)$\Rightarrow$ (iii) This is immediate.

(iii)$\Rightarrow$ (i) Let $u\in\beta I$. Let $v\in \beta I$ be such that $R^{s}(u,v)$ holds. Hence $^{\ast}R(\alpha,\beta)$ holds for all $(\alpha,\beta)\in\mu(u\otimes v)$ by Theorem \ref{strong-relation-extension-generator-theorem-01}.(iv). Fix $(\alpha,\beta)\in\mu(u\otimes v)$. Similarly as in the proof of (i)$\Rightarrow$(ii), by Proposition \ref{proprietà tensori}.\ref{T6},\ref{T8} there exists $\alpha^{\prime}\in\mu(u)$ such that $(\alpha^{\prime},\alpha)\in\mu(u\otimes u), (\alpha^{\prime},\beta)\in\mu(u\otimes v)$. $^{\ast}R(\alpha^{\prime},\beta)$ holds, $^{\ast}R(\beta,\alpha)$ holds by symmetry of $R$, hence $^{\ast}R(\alpha,\alpha^{\prime})$ holds by transitivity and $^{\ast}R(\alpha^{\prime},\alpha)$ holds by symmetry. We conclude that $R^{s}(u,u)$ holds by Theorem \ref{strong-relation-extension-generator-theorem-01}.(ii). 

(iv)$\Rightarrow$ (i) This is immediate.

(i)$\Rightarrow$ (iv) $R^{s}$ is transitive as $R$ is, by Proposition \ref{strong-preservation-transitivity}. We conclude as (i) is equivalent to (ii), which gives us reflexivity and symmetry of $R^{s}$.\end{proof}

\begin{remark} Example \ref{R-s-non-equivalence-relation-example} shows that symmetry of $R^{s}$ does not suffice for $R^{s}$ to be an equivalence relation, even when transitivity holds. \end{remark}

For equivalence relations we arrive to the following characterization.

\begin{remark}\label{equivalence-relation-preserved-by-strong-extension-characterization-theorem}
For an equivalence relation $R \subseteq I^2$, the following are equivalent:
\begin{enumerate}[label=(\roman*)]
    \item $R^s$ is an equivalence;
	\item $R^s$ is reflexive;
    \item $R^{s}$ is total.
\end{enumerate}
\end{remark}

The situation for the strong extensions of pre-orders is similar to that of equivalence relations. In fact, the following analogue of Theorem \ref{strong-preservation-symmetry} holds.

\begin{theorem}
Let $R \subseteq I^2$ be a connected pre-order. The following are equivalent:
\begin{enumerate}[label=(\roman*)]
	\item $R^s$ is reflexive;
    \item $R^{s}$ is a connected pre-order.
\end{enumerate}
\end{theorem}
\begin{proof}
(i) $\Rightarrow$ (ii) Transitivity of $R^{s}$ follows from Proposition \ref{strong-preservation-transitivity}; we are left with proving that $R^{s}$ is connected.

Let $u,v \in \beta I$ be such that $u \not = v$ and suppose that $\lnot R^s(u,v)$, i.e.,
$$\lnot ( \exists (\alpha,\beta) \in \mu(u \otimes v) \, ^{\ast}R(\alpha,\beta) );$$
hence
\begin{equation}\label{pre-order-theorem-equation-01}
\forall (\alpha,\beta) \in \mu(u \otimes v) \, \lnot \,^{\ast}R(\alpha,\beta) .
\end{equation}

If one between $u$ and $v$ is principal, the thesis follows easily from the fact that every pair with a standard component is a tensor pair. So suppose both $u$ and $v$ are non-principal. Pick $(\beta , \alpha) \in \mu(v \otimes u)$ and $\beta^\prime \in \mu(v)$ such that $(\alpha , \beta^\prime) \in \mu(u \otimes v), (\beta,\beta^{\prime})\in \mu(v\otimes v)$, which exist by Proposition \ref{proprietà tensori}.\ref{T6}, \ref{T8}. By (\ref{pre-order-theorem-equation-01}), $\neg\,^{\ast}R(\alpha,\beta^{\prime})$ holds, hence $^{\ast}R(\beta^{\prime},\alpha)$ holds as $^{\ast}R$ is connected. By reflexivity of $R^{s}$, ${^\ast} R(\beta , \beta^{\prime})$ holds and, by transitivity of ${^\ast}R$, ${^\ast}R(\beta , \alpha)$ holds. Since $(\beta , \alpha) \in \mu(v \otimes u)$, we conclude by Theorem \ref{strong-relation-extension-generator-theorem-01}.(ii).

(ii) $\Rightarrow$ (i) This is immediate.
\end{proof}

\section{Mixed extensions of relations}\label{generalization-of-self-divisible-ultrafilters-section}

In \cite{vsobot2021congruence}, \v{S}obot introduced the notion of ``congruence modulo ultrafilter'' in an attempt to extend the notion of congruence modulo integer. In his notation, fixing $p\in\beta\Z\setminus\{0\}$, $\equiv_p \subseteq (\beta \Z)^2$ is the relation such that for all $u,v \in \beta \Z$ $u \equiv_p v$ holds if and only if
$$(\exists  \gamma \in \mu(p)) \, (\exists (\alpha,\beta) \in \mu(u \otimes v)) \, (\alpha \equiv \beta\mod \gamma) .$$

This notion of congruence has limitations: it is not, in general, an equivalence relation. On the other hand, the strong extension of the congruence relation, $C^{s}(p,\cdot,\cdot)$ (for $C(z,x,y):= x\equiv y\mod z$), is always an equivalence relation. In \cite{di2023self}, it was shown that $\equiv_{p}$ coincides with $R^{s}(p,\cdot,\cdot)$ precisely when $p$ is a self-divisible ultrafilter, defined as follows.

\begin{definition*}
An ultrafilter $p\in\beta\Z\setminus\{0\}$ is self-divisible if $\{n\in\Z\mid \{m\in\Z \ \text{s.t.}\ n|m\}\in p\}\in w$.  
\end{definition*} 

In this section, we want to show that many of the results proven for the congruence case in \cite{vsobot2021congruence} and \cite{di2023self} can be generalized to relations satisfying some mild algebraic property.  We start by giving the obvious generalization of the notion of self-divisibility for binary relations (the case of $n$-ary relations would be similar).

\begin{definition}\label{def self divisible ultra}
    Let $I$ be an infinite set, and $R\subseteq I^{2}$ be a relation. We say that $u\in\beta I$ is a self-$R$ ultrafilter if $\{i\in I\mid \{j\in I\mid (i,j)\in R\}\in u\}\in u$.
\end{definition}

\begin{remark}\label{remark} Notice that, by its definition and by Theorem \ref{strong-relation-extension-generator-theorem-01}, $u$ is self-$R$ if and only if each of the following equivalent properties holds: 
\begin{enumerate}[label=(\roman*)]
    \item $\{(i,j)\in I^{2}\mid R(i,j)\}\in u\otimes u$;
    \item $R^{s}(u,u)$;
    \item $\exists(\alpha,\beta)\in\mu(u\otimes u) \ ^{\ast}R(\alpha,\beta)$;
    \item $\forall(\alpha,\beta)\in\mu(u\otimes u) \ ^{\ast}R(\alpha,\beta)$.
\end{enumerate}
\end{remark}

Notice that the equivalent characterization given in Remark \ref{remark}.(ii) highlights that being self-$R$ is related to the reflexivity of the strong extension of $R$ on $u$; when $R^{s}$ is reflexive, all ultrafilters are self-$R$.

The main result connecting self-divisible ultrafilters and extensions of the congruence $C(x,y,z)$ is \cite[Theorem 4.10]{di2023self}, whose relevant part for our inquiries\footnote{Theorem 4.10 in \cite{di2023self} actually shows the equivalence between five different conditions, two of which are peculiar to division and will not be extended to arbitrary relations here.} we recall here for the readers' convenience.

\begin{theorem*} For every $p\in\beta\Z\setminus\{0\}$, the following are equivalent.
\begin{enumerate}
    \item The ultrafilter $p$ is self-divisible.
    \item The relations $\equiv_{p}$, $C^{s}(p,\cdot,\cdot)$ coincide.
    \item The relation $\equiv_{p}$ is an equivalence relation.
\end{enumerate}    
\end{theorem*}

To generalize the theorem above, we now consider ternary relations $R \subseteq J \times I^2$. Our first goal is to understand the behavior of the binary relations $R^{s}(p,\cdot,\cdot)$ whilst $p$ varies in $\beta J$.

When $p=j\in J$ is principal, the situation is simple.

\begin{lemma}\label{strong-and-index-restriction-commute-lemma}
Let $j \in J$ and let $R\subseteq J\times I^{2}$. Then $\left(R(j,\cdot,\cdot)\right)^w = R^w(j,\cdot,\cdot)$ and $\left(R(j,\cdot,\cdot)\right)^s = R^{s}(j,\cdot,\cdot)$.
\end{lemma}
\begin{proof} We will use repeatedly Theorems \ref{canonical-relation-extension-generator-theorem-01} and \ref{strong-relation-extension-generator-theorem-01}. 

In the weak extension case, let $u,v\in\beta I$. Then $(R(j,\cdot,\cdot))^{w}(u,v)$ holds if and only if there exists $\alpha\in\mu(u),\beta\in\mu(v)$ such that $^{\ast}R(j,\alpha,\beta)$ holds if and only if, as $\mu(j)=\{j\}$, there exists $i\in \mu(j),\alpha\in\mu(u),\beta\in\mu(v)$ such that  $^{\ast}R(i,\alpha,\beta)$ holds, if and only if $R^{w}(j,u,v)$ holds.

Similarly, in the strong extension case, let $u,v \in \beta I$. $R(j,\cdot,\cdot)^s (u,v)$ holds if and only if $\exists (\alpha,\beta) \in \mu(u \otimes v) \, ^{\ast}R(j,\alpha,\beta)$, which happens if and only if $(j,\alpha,\beta) \in \mu(j \otimes u \otimes v)$ and $^{\ast}R(j,\alpha,\beta)$ holds, as $\mu(j)=\{j\}$ and, as $j\in J$, we have that $(j,\alpha,\beta)$ is tensor if and only if $(\alpha,\beta)$ is by Proposition \ref{proprietà tensori}.\ref{T4}, \ref{T6}. Finally, $^{\ast}R(j,\alpha,\beta)$ is equivalent to $R^{s}(j,u,v)$.
\end{proof}

  {\v{S}}obot showed that $C^s(p,\cdot,\cdot) \subseteq (\beta \Z)^2$ is an equivalence relation for arbitrary $p\in\beta\Z\setminus\{0\}$. This fact can be seen as a particular case of the following general result, that strengthens Theorem \ref{strong-preservation-symmetry}.

\begin{theorem}\label{strong-index-reduction-general-equivalence-relation-theorem}
Let $R \subseteq J \times I^2$. Assume that, for all $j \in J$, $R(j,\cdot,\cdot)$ is an equivalence relation whose strong extension is reflexive. Then $R^s(p,\cdot,\cdot)$ is an equivalence relation for all $p \in \beta J$.
\end{theorem}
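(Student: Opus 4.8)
The plan is to reduce the statement to the finite case already settled in Corollary \ref{strong-index-restriction-finite-corollary}, by directly unwinding Definition \ref{relation-strong-extension} on three factors rather than invoking any nonstandard machinery. First I would note that, for $w\in\beta J$ and $u,v\in\beta I$, Definition \ref{relation-strong-extension} with $n=3$, $I_1=J$ and $I_2=I_3=I$ reads
\[
R^s(w,u,v)\iff\{j\in J\mid\{a\in I\mid\{b\in I\mid R(j,a,b)\}\in v\}\in u\}\in w ,
\]
and the inner condition on $j$ is precisely $(R_j)^s(u,v)$, i.e.\ $R^s_j(u,v)$ in the notation legitimised by Lemma \ref{strong-and-index-restriction-commute-lemma}. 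Hence, writing $B_{u,v}:=\{j\in J\mid R^s_j(u,v)\}$, we get the clean reformulation $R^s_w(u,v)\iff B_{u,v}\in w$. Since by hypothesis each $R_j$ is an equivalence relation and each $R^s_j$ is reflexive, Corollary \ref{strong-index-restriction-finite-corollary} gives that every $R^s_j$ (for $j\in J$) is an equivalence relation on $\beta I$.

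Then I would verify the three axioms for $R^s_w$ using only that $w$ is a filter. Reflexivity: every $R^s_j$ is reflexive, so $B_{u,u}=J\in w$, whence $R^s_w(u,u)$ for all $u\in\beta I$. Symmetry: if $R^s_w(u,v)$ then $B_{u,v}\in w$, and since each $R^s_j$ is symmetric we have $B_{u,v}=B_{v,u}$, so $B_{v,u}\in w$, i.e.\ $R^s_w(v,u)$. Transitivity: if $R^s_w(u,v)$ and $R^s_w(v,t)$ then $B_{u,v},B_{v,t}\in w$, hence $B_{u,v}\cap B_{v,t}\in w$; every $j$ in this intersection satisfies both $R^s_j(u,v)$ and $R^s_j(v,t)$, so $R^s_j(u,t)$ by transitivity of $R^s_j$, giving $B_{u,v}\cap B_{v,t}\subseteq B_{u,t}$ and therefore $B_{u,t}\in w$, i.e.\ $R^s_w(u,t)$. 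This yields the claim.

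I do not expect a genuine obstacle here: the only delicate points are the unwinding in the first step and the identification of the inner quantifier block with $R^s_j$ through Lemma \ref{strong-and-index-restriction-commute-lemma}, together with the observation that it is the hypothesis ``$R^s_j$ reflexive'' — which is not automatic, cf.\ Example \ref{R-s-non-equivalence-relation-example} — that actually powers reflexivity of $R^s_w$. For the record, the same conclusion is reachable via the $\omega$-hyperenlargement characterization of Theorem \ref{strong-relation-extension-generator-theorem-iterated}: fixing $\tau\in\mu_\infty(w)$ and generators in $\mu_\infty(u)$, $\mu_\infty(v)$, $\mu_\infty(t)$ and assembling tensor $3$-tuples by Theorem \ref{bullet-tensor-pair-creation-theorem}, each axiom for $R^s_w$ reduces to the corresponding fact about $R^s_j$ evaluated along $\tau$; but this costs the iterated-$*$ framework and gains nothing over the combinatorial argument above, so I would present the latter.
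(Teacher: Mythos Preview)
Your proof is correct and takes a genuinely different route from the paper's. The paper works entirely inside the iterated-$*$ framework: it fixes generators in $\mu_1$, builds tensor $3$-tuples via Theorem \ref{bullet-tensor-pair-creation-theorem}, and for each of reflexivity, symmetry and transitivity pushes the corresponding property of $R_j$ through by Transfer, concluding with Theorem \ref{strong-relation-extension-generator-theorem-iterated}. In particular the paper never isolates the sectional reformulation $R^s_w(u,v)\iff\{j\in J\mid R^s_j(u,v)\}\in w$; it argues pointwise with hyperreal witnesses instead. Your approach extracts that reformulation directly from Definition \ref{relation-strong-extension} (via Lemma \ref{strong-and-index-restriction-commute-lemma}) and then reduces everything to the filter axioms for $w$ together with Corollary \ref{strong-index-restriction-finite-corollary}. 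This is strictly more elementary: it needs no saturation, no $\omega$-hyperenlargement, and no tensor-tuple bookkeeping, and it makes transparent why the hypothesis ``$R^s_j$ reflexive for every $j$'' is exactly what is required. The only thing the paper's argument buys is methodological uniformity with the rest of Section \ref{generalization-of-self-divisible-ultrafilters-section}, where the iterated-$*$ machinery is already in play; your final paragraph correctly anticipates this and correctly judges it not worth the cost here.
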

\begin{proof} Fix $p\in\beta J$. First, we notice that if $p\in J$ is principal, the claim follows from Lemma \ref{strong-and-index-restriction-commute-lemma} and Theorem \ref{strong-preservation-symmetry}.

Suppose now that $p \in \beta J \setminus J$. We will repeatedly use the nonstandard characterization of $R^{s}$ without mention.

Reflexivity. Let $u \in \beta I$. By Proposition \ref{proprietà tensori}.\ref{T1}, it is immediate to see that $R^{s}(p,u,u)$ holds if and only if 
\[X=\{j\in J\mid R^{s}(j,u,u)\}\in p.\]

By assumption and by Lemma \ref{strong-and-index-restriction-commute-lemma}, $R^{s}(j,\cdot,\cdot)$ is reflexive for all $j \in J$, hence $X = J\in p$.

Symmetry. Let $u,v \in \beta I$ be such that $R^s(p,u,v)$ holds. Pick $(\gamma,\alpha,\beta)\in\mu(p\otimes u\otimes v)$ such that $^{\ast}R(\gamma,\alpha,\beta)$ holds. By our hypothesis and transfer, $^{\ast}R(\gamma,\cdot,\cdot)$ is symmetric, hence $^{\ast}R(\gamma,\beta,\alpha)$ holds. 

By Proposition \ref{proprietà tensori}.\ref{T6} $(\gamma,\alpha)$ is a tensor pair, hence by reflexivity of $^{\ast}R(\gamma,\cdot,\cdot)$ and by Proposition \ref{proprietà tensori}.\ref{T7}, $^{\ast}R(\gamma,\alpha,\alpha^{\prime})$ holds for all $(\alpha,\alpha^{\prime})\in\mu(u\otimes u)$. In particular, by Proposition \ref{proprietà tensori}.\ref{T8}, \ref{T6} we can take $(\alpha,\alpha^{\prime})$ with $(\beta,\alpha^{\prime})$ tensor. But then $^{\ast}R(\gamma,\beta,\alpha)$ and $^{\ast}R(\gamma,\alpha,\alpha^{\prime})$ holds and, as $^{\ast}R(\gamma,\cdot,\cdot)$ is an equivalence relation by transfer, we deduce that $^{\ast}R(\gamma,\beta,\alpha^{\prime})$ holds. We conclude as $(\gamma,\beta,\alpha^{\prime})$ is a tensor triple by Proposition \ref{proprietà tensori}.\ref{T7} and \ref{T6}. 

Transitivity. Let $u,v,t \in \beta I$ such that $R^s(p,u,v)$ and $R^s(p,v,t)$ hold. Let $(\gamma,\alpha,\beta)\in\mu(p,u,v)$ and, by Proposition \ref{proprietà tensori}.\ref{T8}, let $(\gamma,\beta,\delta)\in\mu(p\otimes v\otimes t)$. Then $^{\ast}R(\gamma,\alpha,\beta)$ and $^{\ast}R(\gamma,\beta,\delta)$ holds, hence by transitivity of $^{\ast}R(\gamma,\cdot,\cdot)$ also $^{\ast}R(\gamma,\alpha,\delta)$ holds. We conclude as $(\gamma,\alpha,\delta)\in \mu(p\otimes u\otimes t)$ by Proposition \ref{proprietà tensori}.\ref{T6}, \ref{T7}. 
\end{proof}

\begin{remark}
    Lemma 5.6 in \cite{vsobot2021congruence}, namely the fact that the strong extension of the congruence is an equivalence relation, is a particular instance of Theorem \ref{strong-index-reduction-general-equivalence-relation-theorem}.
\end{remark}

$R^{s}(p,\cdot,\cdot)$ is, under the hypotheses of Theorem \ref{strong-index-reduction-general-equivalence-relation-theorem}, an equivalence relation. However, as said at the beginning of this section, in many applications, the equivalence relations that one considers are built starting from algebraic operations, and are expected to be well-behaved with respect to such operations. This can be achieved by considering the mixed extensions of relations, which are the generalization of $\equiv_{p}$.

\begin{definition}\label{def mix ext} Let $R\subseteq J\times I^{2}$ be a relation. We let the mixed extension of $R$, denoted as $R^{m}\subseteq \beta J \times (\beta I)^{2}$, to be the relation such that, for all for $p \in \beta J$ and $u,v \in \beta I$, $R^{m} (p,u,v)$ holds if and only if
$$\exists \gamma \in \mu (p) \, \exists (\alpha,\beta) \in \mu (u \otimes v) \, ^{\ast}R(\gamma,\alpha,\beta) .$$
\end{definition}

\begin{remark} By Theorem \ref{canonical-relation-extension-generator-theorem-01}, we can have an intuition of $R^{m}$ as follows: first take $R^{w}$ on $\beta J\times (\beta I)^{2}$, then restrict it to $\beta J \times T$, where $T=\{u\in\beta(I^{2})\mid u \ \text{is a tensor product of ultrafilters in} \ \beta I\}$. 
\end{remark}

\begin{remark} Mixed extensions are actually more natural than they look. For example, given any $f:I^{2}\rightarrow J$, if one considers the relation $R_{f}(z,x,y)$ defined by $f(x,y)=z$, then it is easily seen that $R_{f}^{m}(p,u,v)$ holds precisely when $p=\overline{f}(u\otimes v)$, where we denote by $\overline{f}$ the unique continuous extension of $f$ to $\beta (I^{2})$. This can be used to formalize important notions, such as idempotency: in fact, if $\bullet$ is a binary operation on $I$, saying that $u\in\beta I$ is idempotent with respect to $\overline{\bullet}$ is equivalent to say, in the notations above, that $R_{\bullet}^{m}(u,u,u)$ holds.    
\end{remark}

The above remark, specialized to $+:\N^{2}\rightarrow \N$, gives also an example of a relation for which weak, mixed and strong extensions all behave differently. In fact, letting $R(x,y,z)$ be the relation defined by $x=y+z$, it is immediate to see, using the nonstandard characterizations, that for all $u,v,p\in\beta\N$:
\begin{itemize}
    \item $R^{w}(p,u,v)$ holds whenever $p$ extends the set $\{A+B\mid A\in u, B\in v\}$, i.e. whenever a generator of $p$ can be written as the sum of a generator of $u$ and one of $v$; for example, $R^{w}(2u , u , u)$ holds;
    \item $R^{m}(p,u,v)$ holds if and only if $p = u \oplus v$;
    \item $R^{s}(p,u,v)$ holds if and only if $p,u,v$ are actually principal and $p = u \oplus v$. In fact, as soon as at least one between $\alpha,\beta,\gamma$ is infinite, Proposition \ref{proprietà tensori}.\ref{T2} forces $\neg(\,^{\ast}R(\gamma,\alpha,\beta))$ whenever $(\gamma,\alpha,\beta)$ is a tensor triple.
\end{itemize}

Returning to the general setting, it is clear that $R^s \subseteq R^{m} \subseteq R^{w}$. As before, we are interested in the induced binary relations $R^{m}(p,\cdot,\cdot) \subseteq (\beta I)^2$ for $p \in \beta J$; in analogy with what was done in \cite{di2023self}, our main goal is to understand for which $p\in\beta J$ one has that $R^{m}(p,\cdot,\cdot)=R^{s}(p,\cdot,\cdot)$. This is the case when $p$ is principal, as we now prove.

\begin{proposition}
If $j \in J$ then $R^{m}(j,\cdot,\cdot)=R^{s}(j,\cdot,\cdot)$.
\end{proposition}

\begin{proof}
Let $u,v \in \beta I$. As $\mu(j)= \{ j \}$ and $(j,\alpha,\beta)$ is tensor if and only if $(\alpha,\beta)$ is, by definition $R^{m}(j,u,v)$ holds if and only if $\exists (\alpha,\beta) \in \mu(u \otimes v) \, ^{\ast}R(j,\alpha,\beta)$ holds, i.e. if $R^{s}(j,u,v)$ holds.
\end{proof}

As said, when $p$ is nonprincipal, in general $R^{m}(p,\cdot,\cdot)\neq R^{s}(p,\cdot,\cdot)$, see e.g. \cite[Example 3.1]{di2023self}. However, under some basic algebraic conditions on $R$, the equality can be restored for certain choices of $p$.

\begin{definition}\label{condizioni}
    Let $I$ be a set, and let $e\in I$. Let $R\subseteq  I^{3}$ be a relation.

    We say that $R$ is $e$-transitive if $\forall i,j\in I\setminus\{e\}, \forall a,b\in I$ $R(i,j,e)\wedge R(j,a,b)\Rightarrow R(i,a,b)$.

    We say that $R$ is $e$-reflexive if $\forall i\in I\setminus\{e\}$ $R(i,i,e)$ holds.
\end{definition}

\begin{remark} In the case of the usual congruence relation on $\Z^{3}$, setting $e=0$, it is easily seen that $0$-transitivity corresponds to the fact that if $a\equiv b\mod j$ and $i$ divides $j$ then $a\equiv b\mod i$, and $0$-reflexivity corresponds to the fact that, for all $a\neq 0$, $a\equiv 0\mod a$.\end{remark}

\begin{example}\label{examrelat} Definition \ref{condizioni} covers many natural examples.

\begin{enumerate}
    \item With $I=\Z$ and $e=0$, the congruence relation $R(x,y,z)\Leftrightarrow y\equiv z\mod x$ is easily seen to be $0$-transitive and $0$-reflexive.
    \item More in general, let $(I,\cdot)$ be a group and $e$ its identity. Let $R(x,y,z)$ be the relation $\exists k\in\Z \ yz^{-1}=x^{k}$, where $x^{k}:=\overset{k \ \text{times}}{\overbrace{x\cdot\ldots\cdot x}}$. This relation is $e$-transitive: assume $R(x,y,z)$ and $R(t,x,e)$. Then there exists $h,k\in\Z$ such that $yz^{-1}=x^{k}$ and $xe^{-1}=x=t^{h}$. Hence $yz^{-1}=t^{hk}$, so $R(t,y,z)$ holds. Moreover, this relation is also trivially $e$-reflexive.
    \item Let $I=\C$ and $e=1$. Consider the relation $R \subseteq \C \setminus \{ 1 \} \times \C^2$ such that\footnote{With $\Q(x)$ we denote the field generated by $x \in \C$ over $\Q$.}
$$R(x,a,b) \iff \Q(x,a) = \Q(x,b) .$$

This relation is $1$-transitive. In fact, let $x,y \in \C \setminus \{ 1 \}$ and $a,b \in \C$ be such that $R(x,y,1) \land R(y,a,b)$, i.e., $\Q(x,y) = \Q(x,1) = \Q(x)$ and $\Q(y,a) = \Q(y,b)$. Then $\Q(x,a) = \Q(x,y,a) = \Q(x,y,b) = \Q(x,b)$, which means that $R(x,a,b)$ holds. Moreover, $R$ is also $1$-reflexive, as for all $x \in \C \setminus \{ 1 \}$, $\Q(x,x) = \Q(x) = \Q(x,1)$, i.e., $R(x,x,1)$ holds.
\end{enumerate}
\end{example}

When $R$ is $e$-transitive and $e$-reflexive for some $e$, we can extend the relationship between self-divisible ultrafilters and equality between mixed and strong extensions of congruences (namely, \cite[Theorem 4.10]{di2023self}) as follows.

\begin{theorem}\label{partial-R-self-divisible-generalization-theorem-01}
Let $I$ be a set, and let $e\in I$. Let $R \subseteq I \setminus \{ e \} \times I^2$, and let $p \in \beta I \setminus \{ e \}$. Then:

\begin{enumerate}[label=(\roman*)]
        	\item if $R$ is $e$-transitive and $p$ is a self-$R(\cdot,\cdot,e)$ ultrafilter, then the relations $R^{m}(p,\cdot,\cdot)$ and $R^s(p,\cdot,\cdot)$ coincide;
            \item if the relations $R^{m}(p,\cdot,\cdot)$ and $R^{s}(p,\cdot,\cdot)$ coincide and $R$ is $e$-reflexive, then $p$ is a self-$R(\cdot,\cdot,e)$ ultrafilter.
            \end{enumerate}

In particular, when $R$ is both $e$-transitive and $e$-reflexive, then $R^{m}(p,\cdot,\cdot)=R^{s}(p,\cdot,\cdot)$ if and only if $p$ is a self-$R(\cdot,\cdot,e)$ ultrafilter.

\end{theorem}

\begin{proof} The ``in particular'' part follows immediately by (i),(ii), which we now prove.

(i) $R^{s}(p,\cdot,\cdot)\subseteq R^{m}(p,\cdot,\cdot)$ is always true. To prove the reverse inclusion, let $u,v \in \beta I$ be such that $R^{m}(p,u,v)$ holds. Let $\gamma \in \mu(p), (\alpha, \beta) \in \mu(u \otimes v)$ be such that $^{\ast}R(\gamma , \alpha , \beta) $ holds. Let $w$ be the ultrafilter on $I^{3}$ generated by $(\gamma,\alpha,\beta)$. By Proposition \ref{proprietà tensori}.\ref{T8}, we can pick $\delta$ such that $(\delta,(\gamma,\alpha,\beta))$ is a generator of $p\otimes w$.  By letting $f:I\rightarrow I$ be the identity, and $g:I^{3}\rightarrow I$ be the projection on the first coordinate, by Proposition \ref{proprietà tensori}.\ref{T9} we get that $(\delta,\gamma)$ is a tensor pair; similarly, if we let instead $g:I^{3}\rightarrow I^{2}$ be the projection on the second and third coordinate, we deduce that also $(\delta,(\alpha,\beta))$ is a tensor pair. As $p$ is a self-$R(\cdot,\cdot,e)$ ultrafilter, by Remark \ref{remark}.(iv) we have that $^{\ast}R(\delta , \gamma , e)$ holds. As $^{\ast}R(\gamma , \alpha , \beta) $ holds, by $e$-transitivity of $^{\ast}R$ we deduce $^{\ast}R(\delta , \alpha , \beta)$, which shows that $R^{s}(p,u,v)$ holds as desired.

(ii) Take any $\gamma \in \mu(p)$ and observe that, by transfer of the $e$-reflexivity condition, $^{\ast}R(\gamma,\gamma,e)$ holds. As $(\gamma,e)$ is tensor by Proposition \ref{proprietà tensori}.\ref{T4}, this proves that $R^{m}(p,p,e)$ holds. But $R^{m}(p,\cdot,\cdot)$ is equal to $R^{s}(p,\cdot,\cdot)$ by hypothesis, hence $R^{s}(p,p,e)$ holds, namely $p$ is a self-$R(\cdot,\cdot,e)$ ultrafilter.
\end{proof}

\begin{corollary} Let $I$ be a set, and let $e \in I$. Let $R \subseteq I \setminus \{ e \} \times I^2$ be an $e$-transitive relation. Assume that, for all $j \in J$, $R(j,\cdot,\cdot)$ is an equivalence relation whose strong extension is reflexive. Let $p$ be a self-$R(\cdot,\cdot,e)$ ultrafilter. Then $R^{m}(p,\cdot,\cdot)$ is an equivalence relation.\end{corollary}

\begin{proof} This is immediate from Theorem \ref{partial-R-self-divisible-generalization-theorem-01}.(i) joint with Theorem \ref{strong-index-reduction-general-equivalence-relation-theorem}.\end{proof}

\begin{example} Let us see what Theorem \ref{partial-R-self-divisible-generalization-theorem-01} tells us about the relations discussed in Example \ref{examrelat}.
\begin{enumerate}
    \item In the case of Example \ref{examrelat}.(1), self-$R(\cdot,\cdot,0)$ ultrafilters are exactly the self-divisible ultrafilters introduced in \cite{di2023self}. In this case, for $p$ self-divisible $R^{m}(p,\cdot,\cdot)=R^{s}(p,\cdot,\cdot)$ is an equivalence relation.
    
    \item In the case of Example \ref{examrelat}.(2), $p$ is a self-$R(\cdot,\cdot,e)$ ultrafilter when for all $(\alpha,\beta)\in\mu(p\otimes p)$ $\beta$ is an hyperfinite power of $\alpha$, i.e. there exists $k\in\,^{\ast}\Z$ such that $\beta=\overset{k \ \text{times}}{\overbrace{\alpha\cdot\ldots\cdot\alpha}}$. For example, when $I=\Q \setminus \{0\}, e=1$ and $\cdot$ is the usual multiplication, an ultrafilter $p\in\beta(\Q \setminus \{0\})$ is self-$R(\cdot,\cdot,1)$ if and only if for all $(\alpha,\beta)\in\mu(p\otimes p)$ $\beta$ is a power of $\alpha$; this happens for example when $p=2^{u}$ with $u$ self-divisible.
    
    As shown in Example \ref{examrelat}.(2), $R$ is both $e$-reflexive and $e$-transitive, so by Theorem \ref{partial-R-self-divisible-generalization-theorem-01} we have that $R^{s}(p,\cdot,\cdot)$ is equal to $R^{m}(p,\cdot,\cdot)$ precisely when $p$ is a self-$R(\cdot,\cdot,e)$ ultrafilter. 
     However, let us notice that, in general, there might exist $i\in I\setminus\{e\}$ such that $R^{s}(i,\cdot,\cdot)$ is not an equivalence relation. In fact, $^{\ast}R(i,\alpha,\beta)$ means that $\alpha\beta^{-1}$ belongs to the subgroup $\{i^{k}\mid k \in\,^{\ast}\Z\}$, which shows that $R^{s}(i,\cdot,\cdot)$ is reflexive only when $(I,\cdot)$ is cyclic. In the cyclic  case, we can apply Theorem \ref{strong-index-reduction-general-equivalence-relation-theorem} to deduce that $R^{m}(p,\cdot,\cdot)=R^{s}(p,\cdot,\cdot)$ is an equivalence relation when $p$ is a self-$R(\cdot,\cdot,e)$ ultrafilter.
    
    \item We now consider the relation $R$ introduced in the Example \ref{examrelat}.(3). First, let us notice that $u\in\beta\C$ is a self-$R(\cdot,\cdot,1)$ ultrafilter if and only if for all $(\alpha,\beta)\in\mu(u\otimes u)$ $^{\ast}\Q(\alpha, 1)=\,^{\ast}\Q(\alpha,\beta)$, i.e. $\beta\in\,^{\ast}\Q(\alpha)$. Using this characterization, it is simple to show that there is plenty of self-$R(\cdot,\cdot,1)$ ultrafilters. For example, let us show that this is the case when $u$ is generated by $\alpha\in\,^{\ast}\Q$. In fact, in this case, taken any $(\alpha,\beta)\in\mu(u\otimes u)$ we have that $\alpha,\beta\in\,^{\ast}\Q$. Hence $^{\ast}\Q=\,^{\ast}\Q(\alpha)=\,^{\ast}\Q(\alpha, 1)=\,^{\ast}\Q(\alpha,\beta)$, as desired.
		
		Let us show explicitly that that there are also non self-$R(\cdot, \cdot, 1)$ ultrafilters. To see this, let $N\in\,^{\ast}\N\setminus\N$ be a squarefree number. Let $\alpha=\sqrt{N}$. Then every $\beta$ with $\alpha\sim\beta$ will be the square root of a squarefree number $M\in\,^{\ast}\N\setminus\N$. In particular, if $\alpha\neq\beta$ then $\frac{M}{N}$ will not be a square, hence $\Q(\alpha,\beta)=\Q(\sqrt{N},\sqrt{M})\neq \Q(\sqrt{N})= \Q(\alpha)$, and $u_{\alpha}$ is not a self-$R(\cdot,\cdot,1)$ ultrafilter.

	Finally: as already shown, $R$ is $1$-transitive and $1$-reflexive, so we can apply Theorem \ref{partial-R-self-divisible-generalization-theorem-01} to deduce that $R^{s}(p,\cdot,\cdot)$ is equal to $R^{m}(p,\cdot,\cdot)$ precisely when $p$ is a self-$R(\cdot,\cdot,1)$ ultrafilter. \end{enumerate}
\end{example}

\section{Open questions}\label{sec open questions}

Theorem 4.10 in \cite{di2023self}, in our language, shows the equivalence between three conditions: $p$ being self-divisible, $C^{s}(p,\cdot,\cdot)$ being equal to $C^{m}(p,\cdot,\cdot)$ and $C^{m}(p,\cdot,\cdot)$ being an equivalence relation. The first two conditions are equivalent, under the assumptions of $e$-transitivity and $e$-reflexivity, also in the cases under study here, as shown in Theorem \ref{partial-R-self-divisible-generalization-theorem-01}. Our first question is if the equivalence between these two conditions and $R^{m}(p,\cdot,\cdot)$ being an equivalence relation can be recovered, under the same assumptions. 

\begin{question}
Assume $R$ to be $e$-transitive and $e$-reflexive. Is it true that $R^{m}(p , \cdot , \cdot)$ is an equivalence relation if and only if $p$ is self-$R(\cdot,\cdot,e)$ (i.e. if and only of $R^{m}(p,\cdot,\cdot)=R^{s}(p,\cdot,\cdot)$)?
\end{question}

The hypotheses of $e$-transitivity and $e$-reflexivity are quite natural, but so far it is not clear if they are necessary to prove Theorem \ref{partial-R-self-divisible-generalization-theorem-01}. Our second question is, hence, the following:

\begin{question} Are there any other natural substitutes for $e$-reflexivity and $e$-transitivity that would allow to prove an analogue of Theorem \ref{partial-R-self-divisible-generalization-theorem-01}? Can these assumptions be dropped altogether? \end{question}

The research so far has focused on understanding when strong and mixed extensions are equal. It might be worth investigating also the following related problem.

\begin{question}
    Under which hypotheses does the equality $R^{m}(p,\cdot,\cdot)=R^{w}(p,\cdot,\cdot)$ hold?
\end{question}

Finally, as mentioned before, self-divisible ultrafilters seem to have a role to play in understanding combinatorial properties of the division. Our final question points in a similar direction.

\begin{question}
    What are the relations $R$ for which self-$R$ ultrafilters play a natural combinatorial role?
\end{question}

\bibliographystyle{plain}
\bibliography{references}
\end{document}